\newtheoremstyle{thmpart}
{3pt}
{3pt}
{}
{}
{\normalfont\itshape}
{:}
{.5em}
{\thmname{#1}\thmnumber{ #2}\thmnote{ (#3)}}
\theoremstyle{thmpart}
\newcommand{\overbar}[1]{\mkern 1.5mu\overline{\mkern-1.5mu#1\mkern-1.5mu}\mkern 1.5mu}
\def\@seccntformat#1{\@ifundefined{#1@cntformat}%
   {\csname the#1\endcsname\quad}  
   {\csname #1@cntformat\endcsname}
}
\let\oldappendix\appendix 
\renewcommand\appendix{%
    \oldappendix
    \newcommand{\section@cntformat}{\appendixname~\thesection\quad}
}
\begin{document}
\title{Some short notes on oriented line graphs and related matrices}
%
%
\author{
Jacob Antony\inst{1} \and 
Cyriac Antony\inst{2} \and 
Jinitha Varughese\inst{3} \and 
Bloomy Joseph\inst{4}
}
%
\authorrunning{Antony et al.}
%
\institute{
M.G. University, Kottayam, India; \email{phdmath2401@cmscollege.ac.in} \and 
Luiss University, Rome, Italy; \email{acyriac@luiss.it} \and
Bishop Kurialacherry College for Women, Kottayam, India; \email{jinith@gmail.com} \and 
Government Arts and Science College, Chelakkara, Thrissur, India; \email{bloomykoshy@gmail.com}
}
\maketitle              
\begin{abstract}
Oriented line graph, introduced by Kotani and Sunada (2000), is closely related to Hashimato's non-backtracking matrix (1989). It is known that for regular graphs $G$, the eigenvalues of the adjacency matrix of the oriented line graph $\vec{L}(G)$ of $G$ are the reciprocals of the poles of the Ihara zeta function of $G$. We determine the characteristic polynomial of the \( z \)-Hermitian adjacency matrix of \( \vec{L}(G) \) for each \( z\in \mathbb{C} \) and \( d \)-regular graph \( G \) with \( d\geq 3 \). 
Special cases of this matrix include the Hermitian adjacency matrix of \( \vec{L}(G) \) and the adjacency matrix of the underlying undirected graph of \( \vec{L}(G) \). 
We also exhibit an application to star coloring of graphs. 

\keywords{Oriented line graph  \and Graph spectrum \and \( \alpha \)-Hermitian matrix \and Star coloring.}
\end{abstract}
\section{Introduction}
%
The notion of oriented line graphs is introduced by Kotani and Sunada~\cite{kotani_sunada}, and they are closely related to Hashimato's~\cite{hashimoto} non-backtracking matrix. 
Let \( G \) be a (simple undirected) graph. 
The \emph{oriented line graph} of \( G \) is the oriented graph with vertex set \( \bigcup_{uv\in E(G)}\{(u,v), (v,u)\} \) and there is an arc from a vertex \( (u,v) \) to a vertex \( (v,w) \) in it when \( u\neq w \)~\cite{parzanchevski}. 
We denote the oriented line graph of \( G \) by \( \vec{L}(G) \), and its underlying undirected graph by \( L^*(G) \). 
We can rephrase the definition of oriented line graph in terms of \( \vec{D}(G) \), the simple directed graph obtained from \( G \) be replacing each edge \( uv \) of \( G \) by two arcs \( (u,v) \) and \( (v,u) \) (i.e., replacing each edge by a digon). 
The oriented line graph of \( G \) is the oriented graph with the set of arcs of \( \vec{D}(G) \) as its vertex set, and there is an arc in it from a vertex \( (u,v) \) to a vertex \( (v,w) \) when \( u\neq w \) (see Fig.~\ref{fig:eg Larrow}). 
The non-backtracking matrix of \( G \) is the adjacency matrix of~\( \vec{L}(G) \).

\begin{figure}[hbt]
\centering
\tikzset{
>={Straight Barb[length=1.0mm]},
circnode/.style={draw,circle,minimum size=5pt},
}
\begin{subfigure}[c]{0.5\textwidth}
\centering
\scalebox{0.7}{%
\begin{tikzpicture}[thick]
\path (0:2) node(1)[circnode]{};
\path (120:2) node(2)[circnode]{};
\path (-120:2) node(3)[circnode]{};
\path (1) --+(3.5,0) node(0)[circnode]{};

\draw (0)--(1)--(2)--(3)--(1);
\end{tikzpicture}
}
\caption{\( G \)}
\end{subfigure}%
\begin{subfigure}[c]{0.5\textwidth}
\centering
\scalebox{0.7}{%
\begin{tikzpicture}[thick]
\path (0:2) node(1)[circnode]{};
\path (120:2) node(2)[circnode]{};
\path (-120:2) node(3)[circnode]{};
\path (1) --+(3.5,0) node(0)[circnode]{};

\begin{scope}[thick,decoration={
    markings,
    mark=at position 0.525 with {\arrow{>}}}
    ] 
\draw [postaction={decorate}] (0) to[bend left=20] node(01)[pos=0.5]{} (1);
\draw [postaction={decorate}] (1) to[bend left=20] node(10)[pos=0.5]{} (0);
\draw [postaction={decorate}] (1) to[bend left=20] node(12)[pos=0.5]{} (2);
\draw [postaction={decorate}] (2) to[bend left=20] node(21)[pos=0.5]{} (1);
\draw [postaction={decorate}] (1) to[bend left=20] node(13)[pos=0.5]{} (3);
\draw [postaction={decorate}] (3) to[bend left=20] node(31)[pos=0.5]{} (1);
\draw [postaction={decorate}] (2) to[bend left=20] node(23)[pos=0.5]{} (3);
\draw [postaction={decorate}] (3) to[bend left=20] node(32)[pos=0.5]{} (2);
\end{scope}
\end{tikzpicture}
}
\caption{\( \vec{D}(G) \)}
\end{subfigure}%

\begin{subfigure}[c]{0.5\textwidth}
\centering
\scalebox{0.7}{%
\begin{tikzpicture}[thick]
\begin{scope}[opacity=0]
\path (0:2) node(1)[circnode]{};
\path (120:2) node(2)[circnode]{};
\path (-120:2) node(3)[circnode]{};
\path (1) --+(3.5,0) node(0)[circnode]{};
\end{scope}

\begin{scope}[thick,opacity=0,decoration={
    markings,
    mark=at position 0.525 with {\arrow{>}}}
    ] 
\draw [postaction={decorate}] (0) to[bend left=20] node(01)[pos=0.5]{} (1);
\draw [postaction={decorate}] (1) to[bend left=20] node(10)[pos=0.5]{} (0);
\draw [postaction={decorate}] (1) to[bend left=20] node(12)[pos=0.5]{} (2);
\draw [postaction={decorate}] (2) to[bend left=20] node(21)[pos=0.5]{} (1);
\draw [postaction={decorate}] (1) to[bend left=20] node(13)[pos=0.5]{} (3);
\draw [postaction={decorate}] (3) to[bend left=20] node(31)[pos=0.5]{} (1);
\draw [postaction={decorate}] (2) to[bend left=20] node(23)[pos=0.5]{} (3);
\draw [postaction={decorate}] (3) to[bend left=20] node(32)[pos=0.5]{} (2);
\end{scope}

\node (01-) at (01) [circnode]{};
\node (10-) at (10) [circnode]{};
\node (12-) at (12) [circnode]{};
\node (21-) at (21) [circnode]{};
\node (13-) at (13) [circnode]{};
\node (31-) at (31) [circnode]{};
\node (23-) at (23) [circnode]{};
\node (32-) at (32) [circnode]{};

\begin{scope}[thick,decoration={
    markings,
    mark=at position 0.5 with {\arrow{>}}}
    ] 
\draw [postaction={decorate}] (12-)--(23-);
\draw [postaction={decorate}] (23-)--(31-);
\draw [postaction={decorate}] (31-)--(12-);
\draw [postaction={decorate}] (21-)--(13-);
\draw [postaction={decorate}] (13-)--(32-);
\draw [postaction={decorate}] (32-)--(21-);
\draw [postaction={decorate}] (01-) to[bend right](12-);
\draw [postaction={decorate}] (31-) to[bend right] (10-);
\draw [postaction={decorate}] (21-) to[bend left] (10-);
\draw [postaction={decorate}] (01-) to[bend left](13-);
\end{scope}
\end{tikzpicture}
}
\caption{\( \vec{L}(G) \)}
\end{subfigure}%
\begin{subfigure}[c]{0.5\textwidth}
\centering
\scalebox{0.7}{%
\begin{tikzpicture}[thick]
\begin{scope}[draw=gray,fill=gray]
\path (0:2) node(1)[circnode]{};
\path (120:2) node(2)[circnode]{};
\path (-120:2) node(3)[circnode]{};
\path (1) --+(3.5,0) node(0)[circnode]{};
\end{scope}

\begin{scope}[thick,draw=gray,fill=gray,decoration={
    markings,
    mark=at position 0.525 with {\arrow{>}}}
    ] 
\draw [postaction={decorate}] (0) to[bend left=20] node(01)[pos=0.5]{} (1);
\draw [postaction={decorate}] (1) to[bend left=20] node(10)[pos=0.5]{} (0);
\draw [postaction={decorate}] (1) to[bend left=20] node(12)[pos=0.5]{} (2);
\draw [postaction={decorate}] (2) to[bend left=20] node(21)[pos=0.5]{} (1);
\draw [postaction={decorate}] (1) to[bend left=20] node(13)[pos=0.5]{} (3);
\draw [postaction={decorate}] (3) to[bend left=20] node(31)[pos=0.5]{} (1);
\draw [postaction={decorate}] (2) to[bend left=20] node(23)[pos=0.5]{} (3);
\draw [postaction={decorate}] (3) to[bend left=20] node(32)[pos=0.5]{} (2);
\end{scope}

\node(01-) at (01) [circnode]{};
\node(10-) at (10) [circnode]{};
\node(12-) at (12) [circnode]{};
\node(21-) at (21) [circnode]{};
\node(13-) at (13) [circnode]{};
\node(31-) at (31) [circnode]{};
\node(23-) at (23) [circnode]{};
\node(32-) at (32) [circnode]{};

\begin{scope}[thick,decoration={
    markings,
    mark=at position 0.5 with {\arrow{>}}}
    ] 
\draw [postaction={decorate}] (12-)--(23-);
\draw [postaction={decorate}] (23-)--(31-);
\draw [postaction={decorate}] (31-)--(12-);
\draw [postaction={decorate}] (21-)--(13-);
\draw [postaction={decorate}] (13-)--(32-);
\draw [postaction={decorate}] (32-)--(21-);
\draw [postaction={decorate}] (01-) to[bend right](12-);
\draw [postaction={decorate}] (31-) to[bend right] (10-);
\draw [postaction={decorate}] (21-) to[bend left] (10-);
\draw [postaction={decorate}] (01-) to[bend left](13-);
\end{scope}
\end{tikzpicture}
}
\caption{\( \vec{L}(G) \) drawn on top of \( \vec{D}(G) \)}
\end{subfigure}%
\caption{An example of the oriented line graph operation.}
\label{fig:eg Larrow}
\end{figure}

For \( z\in \mathbb{C} \) and an oriented graph \( \vec{H} \) with vertex set \( \{v_1,v_2,\dots,v_n\} \), the \emph{\( z \)-Hermitian adjacency matrix} of \( \vec{H} \) is the matrix whose \( (i,j) \)-th entry is \( 1 \) if \( (v_i,v_j),(v_j,v_i)\in E(\vec{H}) \); \( z \) if \( (v_i,v_j)\in E(\vec{H}) \) but \( (v_j,v_i)\notin E(\vec{H}) \); \( \bar{z} \) if \( (v_j,v_i)\in E(\vec{H}) \) but \( (v_i,v_j)\notin E(\vec{H}) \); and 0 if \( (v_i,v_j),(v_j,v_i)\notin E(\vec{H}) \).

The characteristic polynomial of the adjacency matrix of \( \vec{L}(G) \) is well-known for \( d \)-regular graphs \( G \) with \( d\geq 3 \). 
It is known that the roots of this polynomial are the reciprocals of the poles of the Ihara zeta function of \( G \)~\cite{hashimoto}.
We determine the characteristic polynomial of the \( z \)-Hermitian adjacency matrix of \( L^*(G) \) for each \( z\in \mathbb{C} \) and \( d \)-regular graph \( G \) with \( d\geq 3 \). 
Special cases of this matrix include the Hermitian adjacency matrix of \( \vec{L}(G) \) and the adjacency matrix of \( L^*(G) \). 
It is interesting that for a \( d \)-regular graph \( G \) with \( d\geq 3 \), the line graph has the characteristic polynomial \( \operatorname{char}(L(G);x)=(x+2)^{m-n}\prod_{i=1}^{n} (x-\lambda_i-d+2) \), whereas \( L^*(G) \) has the characteristic polynomial \( \operatorname{char}(L^*(G);x)=
(x+2)^{m-n}(x-2)^{m-n}\prod_{i=1}^{n} (x-\lambda_i-d+2)(x-\lambda_i+d-2) \).
We also exhibit an application to star coloring of graphs (see Section~\ref{sec:appln star col} for definition).


%
All graphs we consider are simple and finite; they are also undirected unless otherwise specified. 
To clarify, the directed graphs we consider are also simple (i.e., no parallel arcs, although two arcs in opposite directions could be present). 
We denote the first projection map (i.e., \( (x,y)\mapsto x \)) by \( \operatorname{proj_1} \), and the second projection map by \( \operatorname{proj_2} \). 
For convenience, we assume that the vertex set of the complete graph \( K_q \) is \( \mathbb{Z}_q \) for all \( q\in \mathbb{Z} \). 
Throughout this paper, \( G \) denotes a graph on which an operation, such as line graph or oriented line graph operation, is applied, and we denote the number of vertices and number of edges of \( G \) by \( n \) and \( m \), respectively. 


\section[\( z \)-Hermitian Spectrum of \( \vec{L}(G) \)]{\boldmath \( z \)-Hermitian Spectrum of \( \vec{L}(G) \)}\label{sec:alpha-Hermitian spectrum}
We determine the characteristic polynomial of the \( z \)-Hermitian adjacency matrix of \( \vec{L}(G) \) for each \( z\in \mathbb{C} \) and \( d \)-regular graph \( G \) with \( d\geq 3 \). 

Lubetzky and Peres~\cite{lubetzky_peres} proved that the adjacency matrix of an oriented line graph is unitarily similar to a block-diagonal matrix with \( 1\times 1 \) and \( 2\times 2 \) blocks. 
\begin{theorem}[Lubetzky and Peres~\cite{lubetzky_peres}]\label{thm:adj Larrow}
Let \( G \) be a connected \( d \)-regular graph with \( n \) vertices,  \( m \) edges and eigenvalues \( \lambda_1,\lambda_2,\dots,\lambda_n \), where \( d\geq 3 \). 
Then, the non-backtracking matrix of \( G \) is unitarily similar to the following block-diagonal matrix~\( C \), 
\begin{equation}
  \label{eq:B-almost-diagonalization}
   C=\operatorname{diag}\left( 
\begin{bmatrix}
\theta_1 & \alpha_1  \\
0 & \theta'_1
\end{bmatrix},\ldots, \begin{bmatrix}
\theta_n & \alpha_n
 \\ 0 & \theta'_n
\end{bmatrix},-1,\dots,-1,1,\dots,1 \right) 
\end{equation}
where \( -1 \) and \( 1 \) are repeated \( m-n \) times, 
\[ |\alpha_i|=
\begin{cases}
d-2 \text{ if } |\lambda_i|\leq 2\sqrt{d-1}\\
\sqrt{d^2-\lambda_i^2} \text{ otherwise}
\end{cases}
\]
and $\{\theta_i,\theta_i'\}=\{(\lambda/2)+\sqrt{(\lambda/2)^2-(d-1)},(\lambda/2)-\sqrt{(\lambda/2)^2-(d-1)}\}$ for $1\leq i\leq n$.
\end{theorem}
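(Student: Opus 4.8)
The plan is to realise the non-backtracking matrix $B$ (the $2m\times 2m$ adjacency matrix of $\vec L(G)$, with rows and columns indexed by the arcs of $\vec D(G)$) through three incidence-type operators and then to split $\mathbb{C}^{2m}$ into two orthogonal $B$-invariant subspaces on which $B$ acts transparently. Let $S,T\colon\mathbb{C}^{2m}\to\mathbb{C}^{n}$ be given by $(Sf)_v=\sum_{e:\,o(e)=v}f(e)$ and $(Tf)_v=\sum_{e:\,t(e)=v}f(e)$, where $o(e)$ and $t(e)$ denote the tail and head of the arc $e$, and let $J$ be the involution reversing every arc, so $J=J^{\top}$. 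Routine checks give $SS^{\top}=TT^{\top}=dI$, $ST^{\top}=TS^{\top}=A$, $JS^{\top}=T^{\top}$, $JT^{\top}=S^{\top}$, and the crucial identity $B=T^{\top}S-J$. Setting $\mathcal S=\operatorname{range}S^{\top}+\operatorname{range}T^{\top}$, so that $\mathcal S^{\perp}=\ker S\cap\ker T$, the identity yields $BS^{\top}=(d-1)T^{\top}$ and $BT^{\top}=T^{\top}A-S^{\top}$ (hence $\mathcal S$ is $B$-invariant), while for $f\in\ker S$ we get $Bf=-Jf$, and $J$ preserves $\ker S\cap\ker T$ because $SJ=T$ and $TJ=S$ (hence $\mathcal S^{\perp}$ is $B$-invariant). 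Thus $B$ is block-diagonal for $\mathbb{C}^{2m}=\mathcal S\oplus\mathcal S^{\perp}$.

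Next, on $\mathcal S$ I would take an orthonormal eigenbasis $\phi_1,\dots,\phi_n$ of $A$ with $A\phi_i=\lambda_i\phi_i$ and form $W_i=\operatorname{span}\{S^{\top}\phi_i,T^{\top}\phi_i\}$. The Gram relations $\langle S^{\top}\phi_i,S^{\top}\phi_j\rangle=\langle T^{\top}\phi_i,T^{\top}\phi_j\rangle=d\,\delta_{ij}$ and $\langle S^{\top}\phi_i,T^{\top}\phi_j\rangle=\lambda_i\delta_{ij}$ show the $W_i$ are pairwise orthogonal, each $B$-invariant (by the two displayed actions of $B$), with $\bigoplus_i W_i=\mathcal S$. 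Gram--Schmidt produces the orthonormal basis $u=S^{\top}\phi_i/\sqrt d$ and $w=(T^{\top}\phi_i-\tfrac{\lambda_i}{d}S^{\top}\phi_i)/c$ with $c=\sqrt{(d^{2}-\lambda_i^{2})/d}$; computing $Bu$ and $Bw$ shows $B|_{W_i}$ has trace $\lambda_i$ and determinant $d-1$, so its eigenvalues are $\theta_i,\theta_i'=\tfrac{\lambda_i}{2}\pm\sqrt{(\lambda_i/2)^{2}-(d-1)}$, as claimed.

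The off-diagonal magnitude then falls out of unitary invariance of the Frobenius norm: in any Schur form of the $2\times2$ matrix $B|_{W_i}$ one has $|\alpha_i|^{2}=\|B|_{W_i}\|_F^{2}-|\theta_i|^{2}-|\theta_i'|^{2}$. The decisive point is that $\|B|_{W_i}\|_F^{2}=\|Bu\|^{2}+\|Bw\|^{2}=(d-1)^{2}+1$ is independent of $\lambda_i$ (here $\|Bu\|^{2}=(d-1)^{2}$, and a one-line computation gives $\|Bw\|^{2}=1$). For $|\lambda_i|\le 2\sqrt{d-1}$ the roots are complex conjugate, so $|\theta_i|^{2}+|\theta_i'|^{2}=2(d-1)$ and $|\alpha_i|^{2}=(d-2)^{2}$; for $|\lambda_i|>2\sqrt{d-1}$ they are real with $|\theta_i|^{2}+|\theta_i'|^{2}=\lambda_i^{2}-2(d-1)$, giving $|\alpha_i|^{2}=d^{2}-\lambda_i^{2}$. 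On the complement, $B|_{\mathcal S^{\perp}}=-J$ is a real symmetric involution, hence orthogonally diagonalisable with eigenvalues $\pm1$; splitting $\ker S\cap\ker T$ into its symmetric part (the kernel of the unsigned incidence matrix) and its antisymmetric part (the cycle space) counts the multiplicities, giving $B$-eigenvalue $+1$ with multiplicity $m-n+1$ and $-1$ with multiplicity $m-n$ (one more in the bipartite case).

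The main obstacle — and the only place where the Perron direction and bipartiteness enter — is that $W_i$ collapses to a line precisely when $\lambda_i=\pm d$, i.e.\ when $c=0$: this occurs for the Perron eigenvalue $d$ always, and for $-d$ exactly when $G$ is bipartite. There the prescribed block is already diagonal, since $|\alpha_i|=\sqrt{d^{2}-d^{2}}=0$, with eigenvalue set $\{d-1,1\}$ (respectively $\{-(d-1),-1\}$). To complete the one-dimensional $W_i$ into the required $2\times2$ diagonal block I would borrow the missing $+1$ (respectively $-1$) eigenvector from $\mathcal S^{\perp}$, which is legitimate since that vector is orthogonal to all of $\mathcal S$. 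The remaining bookkeeping must be checked to close: removing one $+1$ vector (and, when $G$ is bipartite, one $-1$ vector) from $\mathcal S^{\perp}$ leaves exactly $m-n$ copies of each of $\pm1$, matching the tail of $C$. Assembling the Schur-rotated orthonormal bases of the $W_i$ together with the surviving $\pm1$ eigenvectors of $\mathcal S^{\perp}$ into a single unitary $U$ then gives $U^{*}BU=C$.
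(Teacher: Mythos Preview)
The paper does not prove this theorem: it is quoted verbatim as a result of Lubetzky and Peres and used as a black box in the proof of Theorem~\ref{thm:Lstar char poly}. So there is no in-paper argument to compare against.

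That said, your proposal is correct and is essentially the original Lubetzky--Peres argument. The decomposition $B=T^{\top}S-J$, the invariant splitting $\mathbb{C}^{2m}=\mathcal S\oplus\mathcal S^{\perp}$ with $\mathcal S=\operatorname{range}S^{\top}+\operatorname{range}T^{\top}$, the two-dimensional eigenblocks $W_i=\operatorname{span}\{S^{\top}\phi_i,T^{\top}\phi_i\}$ with trace $\lambda_i$ and determinant $d-1$, and the identification of $B|_{\mathcal S^{\perp}}$ with $-J$ are exactly the ingredients in their proof. Your Frobenius-norm computation $\|B|_{W_i}\|_F^{2}=(d-1)^{2}+1$ to extract $|\alpha_i|$ is a clean way to get the off-diagonal entry without writing the Schur vector explicitly. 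The multiplicity count on $\mathcal S^{\perp}$ via the symmetric/antisymmetric split under $J$ (kernel of the unsigned incidence matrix versus the cycle space) is right, including the bipartite adjustment, and your handling of the collapsed blocks at $\lambda_i=\pm d$ by borrowing a $\pm1$ eigenvector from $\mathcal S^{\perp}$ is precisely the bookkeeping needed to land on the stated form of $C$ with exactly $m-n$ copies each of $\pm1$.
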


Using Theorem~\ref{thm:adj Larrow}, we determine the \( z \)-Hermitian spectrum of \( \vec{L}(G) \). 
\begin{theorem}\label{thm:Hermitian spectrum}
Let \( G \) be a connected \( d \)-regular graph with \( n \) vertices,  \( m \) edges and eigenvalues\\ \( \lambda_1,\lambda_2,\dots,\lambda_n \), where \( d \geq 3 \). 
Then, 
the $z$-Hermitian characteristic polynomial of \( \vec{L}(G) \) where \( z \in \mathbb{C} \) is 
\[
(x^2-(z+\bar{z})^2)^{m-n}\prod_{i=1}^{n} \left[(x-z\lambda_i)(x-\bar{z}\lambda_i)-|z|^2 d^2 +(z+\bar{z})^2(d-1)\right] 
\]
\end{theorem}
\begin{proof}[Proof overview]
    Let \(B\) be the non-backtracking matrix of \(G\). Then \(B\) is unitarily similar to the block diagonal matrix \(C\) in Theorem~\ref{thm:adj Larrow}. 
    Thus \( \theta_j+\theta_j' = \lambda_j\), \quad
    \( \theta_j \theta_j' = (d-1)\),
    and \( \theta_j^2 + \theta_j'^2 = \lambda_j^2-2(d-1)\).
    
    Since \(zB+\bar{z}B^\ast = z(P^\ast CP) + \bar{z}(P^\ast CP)^\ast = zP^\ast CP + \bar{z} P^\ast C^\ast P = P^\ast zC P + P^\ast \bar{z}C^\ast P = P^\ast (zC+\bar{z}C^\ast) P\), the \(z\)-Hermitian adjacency matrix of \(L^\ast(G)\) is unitarily similar to \(zC+\bar{z}C^\ast=\)
    \(diag\left(
    \begin{bmatrix}
        z\theta_1+\bar{z}\bar{\theta}_1 & z\alpha_1 \\ \bar{z}\bar{\alpha}_1 & z\theta_1'+\bar{z}\bar{\theta}_1'
    \end{bmatrix},
    \dotsc,
    \right.\)\\
    \(\left.
    \begin{bmatrix}
            z\theta_n+\bar{z}\bar{\theta}_n & z\alpha_n \\ \bar{z}\bar{\alpha}_n & z\theta_n'+\bar{z}\bar{\theta}_n'
    \end{bmatrix},-z-\bar{z},\dotsc,-z-\bar{z},z+\bar{z},\dotsc,z+\bar{z}
    \right)\). 
    
    For \( \lambda_j \le 2\sqrt{d-1} \), \(|\alpha_j|^2=(d-2)^2\) and \( \theta_j'=\bar{\theta}_j\).
    Therefore,
    \[
(x^2-(z+\bar{z})^2)^{m-n}\prod_{i=1}^{n} \left( (x-z\lambda_i)(x-\bar{z}\lambda_i)-|z|^2 d^2 +(z+\bar{z})^2(d-1)\right) 
\]

For \( \lambda_j > 2\sqrt{d-1}\), \(|\alpha_j|^2 = d^2-\lambda_j^2\) and \( \bar{\theta}_j = \theta_j\).
Therefore,
    \[
(x^2-(z+\bar{z})^2)^{m-n}\prod_{i=1}^{n} \left( (x-z\lambda_i)(x-\bar{z}\lambda_i)-|z|^2 d^2 +(z+\bar{z})^2(d-1)\right) 
\]
\end{proof}

\begin{corollary}
The \( \omega \)-Hermitian characteristic polynomial of \( \vec{L}(G) \)  is
\[ (x^2-1)^{m-n}\prod_{i=1}^{n} \left[ (x^2+x\lambda_i+\lambda_i^2)-(d^2-d+1) \right] \qquad 
\] 
\end{corollary}
\begin{corollary}\label{cor:char poly Lstar}
The characteristic polynomial of \( L^\ast(G) \) is
\[ (x^2-4)^{m-n}\prod_{i=1}^{n} \left( (x-\lambda_i)^2-(d-2)^2 \right)  \]
\end{corollary}
A direct proof of Corollary~\ref{cor:char poly Lstar} is provided in Appendix. 
\begin{corollary}
If a connected \( d \)-regular graph \( G \) is integral and \( d\geq 3 \), then \( L^*(G) \) is integral.
\end{corollary}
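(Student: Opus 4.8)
The plan is to read off the eigenvalues of $L^*(G)$ directly from the factored characteristic polynomial supplied by Theorem~\ref{thm:Lstar char poly} and check that each one is an integer under the hypothesis. Recall that a graph is integral precisely when every eigenvalue of its adjacency matrix is an integer, and that the eigenvalues of $L^*(G)$ are exactly the roots of $\operatorname{char}(L^*(G);x)$. So it suffices to verify that every root of
\[
(x^2-4)^{m-n}\prod_{i=1}^{n}\left((x-\lambda_i)^2-(d-2)^2\right)
\]
lies in $\mathbb{Z}$ whenever $G$ is integral.

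First I would dispose of the factor $(x^2-4)^{m-n}$: since $x^2-4=(x-2)(x+2)$, its only roots are $\pm 2$, which are integers regardless of any hypothesis on $G$. Next I would treat each factor $(x-\lambda_i)^2-(d-2)^2$ as a difference of two squares and factor it as $\bigl(x-\lambda_i-(d-2)\bigr)\bigl(x-\lambda_i+(d-2)\bigr)$, so that its roots are $\lambda_i+(d-2)$ and $\lambda_i-(d-2)$.

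The conclusion then follows from two integrality observations. Since $G$ is assumed integral, each $\lambda_i$ is an integer; and since $G$ is $d$-regular, $d$ (hence $d-2$) is an integer. Therefore each $\lambda_i\pm(d-2)$ is an integer, and combined with the roots $\pm 2$ from the first factor, every root of $\operatorname{char}(L^*(G);x)$ is an integer, which is exactly the assertion that $L^*(G)$ is integral. I expect no real obstacle here: the entire content is packaged in Theorem~\ref{thm:Lstar char poly}, and the corollary is simply the observation that integer inputs $\lambda_i$ and integer $d$ produce integer roots after the elementary difference-of-squares factorization. The only point worth stating explicitly is that the degree $d$ is an integer, so that $\lambda_i\pm(d-2)$ cannot fail to be integral for arithmetic reasons.
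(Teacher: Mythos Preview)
Your argument is correct and is exactly the intended one: the paper states the corollary immediately after Theorem~\ref{thm:Lstar char poly} without a separate proof, treating it as an immediate consequence of the factorization of $\operatorname{char}(L^*(G);x)$. Your difference-of-squares reading of the roots as $\pm 2$ and $\lambda_i\pm(d-2)$ is precisely what the paper has in mind.
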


\noindent
Remark: 
\( \operatorname{char}(L(G);x)=(x+2)^{m-n}\prod_{i=1}^{n} \left( (x-\lambda_i)-(d-2) \right) 
\), 
and\\
\( \operatorname{char}(L(\vec{D}(G));x)=x^{m-n}\prod_{i=1}^{n} (x-\lambda_i) \)
\cite{beineke_bagga2021a} 
(note that \( L(\vec{H}) \) denotes the line digraph of a digraph \( \vec{H} \)).

The skew-symmetric adjacency matrix of \( \vec{L}(G) \) can be determined similarly. 
The proof is omitted (hint: \( B-B^t=P^*(C-C^*)P \)). 
\begin{theorem}
Let \( G \) be a connected \( d \)-regular graph with \( n \) vertices,  \( m \) edges and eigenvalues\\ \( \lambda_1,\lambda_2,\dots,\lambda_n \), where \( d\geq 3 \). 
Then, the characteristic polynomial of the skew-symmetric adjacency matrix of \( \vec{L}(G) \) is \( x^{2(m-n)} \prod_{i=1}^{n} (x^2+d^2-\lambda_i^2) \).
\qed
\end{theorem}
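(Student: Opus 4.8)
The plan is to follow the proof of Theorem~\ref{thm:Lstar char poly} almost verbatim, replacing the sum \( B+B^t \) by the difference \( B-B^t \). As before, let \( B \) be the non-backtracking matrix and write \( B=P^*CP \) with \( P \) unitary and \( C \) the block matrix of Equation~\ref{eq:B-almost-diagonalization}. Since \( B \) is real, \( B^t=B^* \), so \( B-B^t=P^*CP-P^*C^*P=P^*(C-C^*)P \); equivalently \( B-B^t=P^*(C^*-C)P \) up to the overall sign recorded in the hint. Because \( B-B^t \) is a real skew-symmetric matrix, its characteristic polynomial is unchanged under negation, so it suffices to compute \( \operatorname{char}(C^*-C;x) \).

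Next I would read off \( C^*-C \) from the expressions for \( C \) and \( C^* \) already displayed in the proof of Theorem~\ref{thm:Lstar char poly}. The scalar entries \( -1 \) and \( 1 \) cancel, each contributing \( m-n \) zeros, for a total of \( 2(m-n) \) zero eigenvalues; this is the factor \( x^{2(m-n)} \). Each remaining \( 2\times2 \) block is
\[
\begin{bmatrix}
\overbar{\theta_i}-\theta_i & -\alpha_i \\
\overbar{\alpha_i} & \overbar{\theta_i'}-\theta_i'
\end{bmatrix},
\]
whose characteristic polynomial is
\[
g_i(x)=x^2-\bigl((\overbar{\theta_i}-\theta_i)+(\overbar{\theta_i'}-\theta_i')\bigr)x+(\overbar{\theta_i}-\theta_i)(\overbar{\theta_i'}-\theta_i')+|\alpha_i|^2.
\]
Since \( \theta_i+\theta_i'=\lambda_i \) is real, the coefficient of \( x \) equals \( \overbar{\lambda_i}-\lambda_i=0 \), so \( g_i \) has no linear term, consistent with the characteristic polynomial of a skew-symmetric matrix being even.

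The one real computation is the constant term, and here I expect the same case distinction as in the \( h_i \) calculation to be the crux. When \( |\lambda_i|\le 2\sqrt{d-1} \), the numbers \( \theta_i,\theta_i' \) are complex conjugates, so \( (\overbar{\theta_i}-\theta_i)(\overbar{\theta_i'}-\theta_i')=-(\theta_i-\overbar{\theta_i})^2=4(d-1)-\lambda_i^2 \); combined with \( |\alpha_i|^2=(d-2)^2 \) and the identity \( 4(d-1)+(d-2)^2=d^2 \), this gives \( g_i(x)=x^2+d^2-\lambda_i^2 \). When \( |\lambda_i|>2\sqrt{d-1} \), the numbers \( \theta_i,\theta_i' \) are real, so the product \( (\overbar{\theta_i}-\theta_i)(\overbar{\theta_i'}-\theta_i') \) vanishes, and \( |\alpha_i|^2=d^2-\lambda_i^2 \) again yields \( g_i(x)=x^2+d^2-\lambda_i^2 \). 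Multiplying the block contributions with the \( x^{2(m-n)} \) factor gives \( x^{2(m-n)}\prod_{i=1}^{n}(x^2+d^2-\lambda_i^2) \), as claimed. The only nonroutine point is the cancellation \( 4(d-1)+(d-2)^2=d^2 \) in the first case, mirroring the corresponding step for \( L^*(G) \).
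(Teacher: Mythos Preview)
Your proof is correct and follows exactly the approach the paper intends: it mirrors the proof of Theorem~\ref{thm:Lstar char poly} with \( B-B^t=P^*(C-C^*)P \) in place of \( B+B^t=P^*(C+C^*)P \), and the block-by-block evaluation (including the case split and the identity \( 4(d-1)+(d-2)^2=d^2 \)) is precisely what the omitted argument would contain. Your remark that the sign discrepancy between \( C-C^* \) and the paper's hint \( C^*-C \) is immaterial for a skew-symmetric matrix is also correct.
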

\noindent

The next theorem follows from Lemma~3.1 of Bilu and Linial~\cite{bilu_linial} since \( L^*(G) \) is a 2-lift of \( L(G) \)~\cite{cyriac_shalu1}, and the characteristic polynomials of \( L^*(G) \) and \( L(G) \) are 
\( (x^2-4)^{m-n}\prod_{i=1}^{n} \left( (x-\lambda_i)^2-(d-2)^2 \right) \) and \( (x+2)^{m-n}\prod_{i=1}^{n} \left( (x-\lambda_i)-(d-2) \right) \),\\ respectively. 
\begin{theorem}\label{thm:signed assoc Lstar}
Let \( G \) be a connected \( d \)-regular graph with \( n \) vertices,  \( m \) edges and eigenvalues\\ \( \lambda_1,\lambda_2,\dots,\lambda_n \), where \( d\geq 3 \). 
Let \( \psi \) be an LBH from \( L^*(G) \) to \( L(G) \). 
Let \( \{V_0,V_1\} \) be a partition of the vertex set of \( \vec{L}(G) \) such that \( |\psi^{-1}(v)\cap V_i|=1 \) for each vertex \( v \) of \( G \) and each \( i\in \mathbb{Z}_2 \). 
Let \( (L(G),s) \) be the signed graph such that for each edge \( uv \) of \( L(G) \), we have \( s(uv)=1 \) if and only if \( uv \) belongs to the cut \( (V_0,V_1) \) in \( \vec{L}(G) \). 
Then, the characteristic polynomial of the signed adjacency matrix of \( (L(G),s) \) is \( (x-2)^{m-n}\prod_{i=1}^n (x-\lambda_i+d-2) \). 
\qed
\end{theorem}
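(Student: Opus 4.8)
The plan is to invoke Bilu--Linial's Lemma~3.1 and thereby reduce the whole computation to a single polynomial division, using the two characteristic polynomials already available from Theorem~\ref{thm:Lstar char poly} and the Remark that follows it.

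First I would make the 2-lift structure explicit. The vertices of \( L(G) \) are the edges of \( G \), and each edge \( e=\{u,v\} \) has fiber \( \psi^{-1}(e)=\{(u,v),(v,u)\} \) in \( \vec{L}(G) \). The condition \( |\psi^{-1}(e)\cap V_i|=1 \) says that \( \{V_0,V_1\} \) selects exactly one sheet over each fiber, so the projection \( L^*(G)\to L(G) \) is genuinely a 2-lift with sheets \( V_0 \) and \( V_1 \). The signing \( s \) defined through the cut \( (V_0,V_1) \) records, for each edge of \( L(G) \), whether its two lifts stay within the sheets or cross between them; this is precisely the datum Bilu and Linial attach to a 2-lift. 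I would then quote their lemma in the form I need: for a 2-lift of a base graph \( H \) with signing \( s \), the spectrum of the lift is the union of the spectrum of the adjacency matrix of \( H \) and the spectrum of the signed adjacency matrix \( A_s \); equivalently, \( \operatorname{char}(L^*(G);x)=\operatorname{char}(L(G);x)\cdot\operatorname{char}(A_s;x) \), where \( A_s \) is the signed adjacency matrix of \( (L(G),s) \).

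The remaining step is to solve for \( \operatorname{char}(A_s;x) \) by division. Using \( \operatorname{char}(L^*(G);x)=(x^2-4)^{m-n}\prod_{i=1}^n\left((x-\lambda_i)^2-(d-2)^2\right) \) and \( \operatorname{char}(L(G);x)=(x+2)^{m-n}\prod_{i=1}^n\left((x-\lambda_i)-(d-2)\right) \), the factor \( (x+2)^{m-n} \) cancels one factor of \( (x^2-4)^{m-n}=(x-2)^{m-n}(x+2)^{m-n} \), leaving \( (x-2)^{m-n} \); and each \( (x-\lambda_i)^2-(d-2)^2 \) factors as \( \left((x-\lambda_i)-(d-2)\right)\left((x-\lambda_i)+(d-2)\right) \), so dividing by \( (x-\lambda_i)-(d-2) \) leaves \( (x-\lambda_i)+(d-2)=x-\lambda_i+d-2 \). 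Hence \( \operatorname{char}(A_s;x)=(x-2)^{m-n}\prod_{i=1}^n(x-\lambda_i+d-2) \), as claimed.

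The main obstacle is the bookkeeping in the middle step: I must check that the signing \( s \) arising from the cut \( (V_0,V_1) \) matches Bilu--Linial's signing convention exactly, so that the factorization attaches \( \operatorname{char}(A_s;x) \) rather than \( \operatorname{char}(-A_s;x) \), which would reflect the spectrum and produce \( (x+2)^{m-n}\prod_{i=1}^n(x+\lambda_i-d+2) \) instead. Once the convention is pinned down, the rest is the one-line division above; moreover, the fact that \( \operatorname{char}(L(G);x) \) divides \( \operatorname{char}(L^*(G);x) \) cleanly is itself a consistency check confirming that the signing has been identified correctly.
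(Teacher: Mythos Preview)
Your proposal is correct and follows exactly the paper's own argument: the paper states that the theorem follows from Bilu--Linial's Lemma~3.1 because \( L^*(G) \) is a 2-lift of \( L(G) \), and then one simply divides \( \operatorname{char}(L^*(G);x) \) by \( \operatorname{char}(L(G);x) \). Your additional care in spelling out the fibers, the signing convention, and the consistency check is more detail than the paper provides, but the route is the same.
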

\noindent
Remark: The existence and uniqueness of \( (L(G),s) \) is guaranteed (uniqueness up to switching).

\section{Consequence to star coloring}\label{sec:appln star col}
For a positive integer \( q \), a \emph{star \( q \)-coloring} of a graph \( G \) is a function \( f\colon V(G)\to \mathbb{Z}_q \) such that (i)~\( f(u)\neq f(v) \) for every edeg \( uv \) of \( G \), and (ii)~there is no path \( u,v,w,x \) in \( G \) with \( f(u)=f(w) \) and \( f(v)=f(x) \). 
A \emph{Locally Bijective Homomorphism (LBH)} from a graph \( G \) to a graph \( H \) is a mapping \( \psi\colon V(G)\to V(H) \) such that for every vertex \( v \) of \( G \), the restriction of \( \psi \) to the neighbourhood \( N_G(v) \) is a bijection from \( N_G(v) \) onto \( N_H(\psi(v)) \)~\cite{fiala_kratochvil}. 

For \( p\geq 2 \), a \( K_{1,p+1} \)-free \( 2p \)-regular graph \( G \) is star \( (p+2) \)-colorable if and only if \( G \) admits an LBH to \( L^*(K_{p+2}) \)~\cite{cyriac_shalu1}. 
Since existence of LBH from a graph \( H \) to a graph \( J \) implies that the characteristic polynomial of \( J \) divides the characteristic polynomial of \( H \)~\cite{fiala_kratochvil}, we have the following by Corollary~\ref{cor:char poly Lstar}. 
\begin{theorem}
Let \( G \) be a \( K_{1,p+1} \)-free \( 2p \)-regular star \( (p+2) \)-colorable graph, where \( p\geq 2 \). 
Then, the characteristic polynomial of \( ( \)the adjacency matrix of\( ) \) \( G \) is divisible by \( (x-2p) (x+2)^{(p-1)(p+2)/2} \) \( (x-2)^{p(p+1)/2} (x-p+2)^{p+1} (x+p)^{p+1}  \).
\qed
\end{theorem}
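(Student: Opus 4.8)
The plan is to combine the graph-homomorphism characterization of star colorability with the spectral divisibility property of locally bijective homomorphisms, and then reduce everything to a direct evaluation of Theorem~\ref{thm:Lstar char poly} on the complete graph $K_{p+2}$. First I would invoke the result of \cite{mypreprint8}: since $G$ is $K_{1,p+1}$-free, $2p$-regular, and star $(p+2)$-colorable with $p\geq 2$, it admits an LBH $\psi\colon V(G)\to V(L^*(K_{p+2}))$. Next I would apply the Fiala--Kratochvil principle \cite{fiala_kratochvil} that an LBH from $H$ to $J$ forces $\operatorname{char}(J;x)$ to divide $\operatorname{char}(H;x)$; taking $H=G$ and $J=L^*(K_{p+2})$, it suffices to show that $\operatorname{char}(L^*(K_{p+2});x)$ equals the claimed product.

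The core computation is an application of Theorem~\ref{thm:Lstar char poly} with the base graph $K_{p+2}$. Here $K_{p+2}$ is connected and $(p+1)$-regular, so I would set $d=p+1$ (which satisfies $d\geq 3$ precisely because $p\geq 2$), $n=p+2$, and $m=\binom{p+2}{2}$. The spectrum of $K_{p+2}$ consists of the eigenvalue $p+1$ with multiplicity one and the eigenvalue $-1$ with multiplicity $p+1$. Substituting into the formula from Theorem~\ref{thm:Lstar char poly}, I would compute $m-n=(p+2)(p-1)/2$ so that the prefactor is $(x^2-4)^{(p-1)(p+2)/2}$, and then evaluate the product $\prod_i((x-\lambda_i)^2-(d-2)^2)$ using $d-2=p-1$. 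As a sanity check, $L^*(K_{p+2})$ is $2p$-regular, matching the degree of $G$, and the top eigenvalue $2p$ of $G$ is reflected by the factor $(x-2p)$.

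The remaining work is purely the factorization $(x-\lambda_i)^2-(p-1)^2=(x-\lambda_i-(p-1))(x-\lambda_i+(p-1))$ applied to each eigenvalue. The term $\lambda_i=p+1$ yields $(x-2p)(x-2)$, while each of the $p+1$ copies of $\lambda_i=-1$ yields $(x-p+2)(x+p)$. The only subtlety—and the step where a bookkeeping slip is easiest—is collecting the exponents of $(x-2)$: the single $(x-2)$ produced by the eigenvalue $p+1$ combines with the $(x-2)^{(p-1)(p+2)/2}$ coming from the factor $(x^2-4)^{m-n}$, and the identity $(p-1)(p+2)/2+1=p(p+1)/2$ is exactly what turns the exponent into the stated $p(p+1)/2$. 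Assembling the factors gives $\operatorname{char}(L^*(K_{p+2});x)=(x-2p)(x+2)^{(p-1)(p+2)/2}(x-2)^{p(p+1)/2}(x-p+2)^{p+1}(x+p)^{p+1}$, which by the divisibility established above divides $\operatorname{char}(G;x)$, completing the proof.
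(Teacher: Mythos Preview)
Your proposal is correct and follows exactly the route the paper takes: invoke the LBH characterization from \cite{mypreprint8}, apply the divisibility principle from \cite{fiala_kratochvil}, and evaluate Theorem~\ref{thm:Lstar char poly} on $K_{p+2}$. You have simply made explicit the arithmetic (eigenvalues of $K_{p+2}$, the value of $m-n$, and the exponent identity $(p-1)(p+2)/2+1=p(p+1)/2$) that the paper leaves to the reader.
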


\subsubsection{\discintname}
The authors have no competing interests to declare that are relevant to the content of this article.


\bibliographystyle{splncs04}
\bibliography{myRefs14}

\appendix

\section{Direct proof for spectrum of \( L^*(G) \)}

\begin{theorem}\label{thm:Lstar char poly}
Let \( G \) be a connected \( d \)-regular graph with \( n \) vertices,  \( m \) edges and eigenvalues\\ \( \lambda_1,\lambda_2,\dots,\lambda_n \), where \( d\geq 3 \). 
Then, 
the characteristic polynomial of \( L^*(G) \) is exactly 
\( 
(x^2-4)^{m-n} \)\\ \( \prod_{i=1}^{n} \left( (x-\lambda_i)^2-(d-2)^2 \right) 
\).
\end{theorem}
\begin{proof}
Note that \( \theta_i+\theta_i'=\lambda_i \), and \( \theta_i \theta_i'=d-1 \) for all \( i \). 
Let \( B \) denote non-backtracking matrix of \( G \); that is, the adjacency matrix of \( \vec{L}(G) \). 
By Theorem~\ref{thm:adj Larrow}, \( B \) is unitarily similar to the matrix \( C \) of Equation~\ref{eq:B-almost-diagonalization}. 
That is, \( B=P^*CP \) for some unitary matrix \( P \) (i.e., \( P^{-1}=P^* \)). 
Note that \( B+B^t \) is the adjacency matrix of \( L^*(G) \). 
Also, 
\( B+B^t=B+B^*=P^*CP+(P^*CP)^*=P^*CP+P^*C^*P=P^*(C+C^*)P \). 
That is, \( B+B^t \) is unitarily similar to \( C+C^* \). 

\(
C^*=\operatorname{diag}\left(
\begin{bmatrix}
\overbar{\theta_1} & 0 \\
\overbar{\alpha_1} & \overbar{\theta_1'} 
\end{bmatrix},
\dots,
\begin{bmatrix}
\overbar{\theta_n} & 0 \\
\overbar{\alpha_n} & \overbar{\theta_n'} 
\end{bmatrix},
-1,\dots,-1,
1,\dots,1
\right)
\text{, and }
\)
~\\
\(
C+C^*=\operatorname{diag}\left(
\begin{bmatrix}
\theta_1+\overbar{\theta_1} & \alpha_1 \\
\overbar{\alpha_1} & \theta_1'+\overbar{\theta_1'} 
\end{bmatrix},
\dots,
\begin{bmatrix}
\theta_n+\overbar{\theta_n} & \alpha_n \\
\overbar{\alpha_n} & \theta_n'+\overbar{\theta_n'} 
\end{bmatrix},
-2,\dots,-2,
2,\dots,2
\right)
\),
~\\
\begingroup
where \( -1 \) and \( 1 \) (resp.\  \( -2 \) and \( 2 \)) are repeated \( m-n \) times. 
If \( |\lambda_i|\leq 2\sqrt{d-1} \), then \( \theta_i+\overbar{\theta_i}=\lambda_i=\theta_i'+\overbar{\theta_i'} \). 
If \( |\lambda_i|> 2\sqrt{d-1} \), then \( \theta_i+\overbar{\theta_i}=2\theta_i \), and \( \theta_i'+\overbar{\theta_i'}=2\theta_i' \). 
~\\
Since \( B+B^t \) is unitarily similar to \( C+C^* \), the required polynomial  
\(
\operatorname{char}(B+B^t;x)=\operatorname{char}(C+C^*;x)=det(xI-(C+C^*))=(x+2)^{m-n}(x-2)^{m-n}\prod_{i=1}^{n} h_i(x) \), \\
where \( h_i(x)=\left( (x-(\theta_i+\overbar{\theta_i}))(x-(\theta_i'+\overbar{\theta_i'}))-\alpha_i\overbar{\alpha_i}\right) 
\). 
Note that \( h_i(x)= 
x^2-x(\theta_i+\theta_i'+\overbar{\theta_i+\theta_i'})+(\theta_i+\overbar{\theta_i})(\theta_i'+\overbar{\theta_i'})-|\alpha_i|^2 = x^2-x(\lambda_i+\lambda_i)+(\theta_i+\overbar{\theta_i})(\theta_i'+\overbar{\theta_i'})-|\alpha_i|^2 \)
That is, \( h_i(x)=x^2-2\lambda_i x+(\theta_i+\overbar{\theta_i})(\theta_i'+\overbar{\theta_i'})-|\alpha_i|^2 \). 
Clearly, 
\( (\theta_i+\overbar{\theta_i})(\theta_i'+\overbar{\theta_i'})= \lambda_i^2 \) if \( |\lambda_i|\leq 2\sqrt{d-1} \), and \( (\theta_i+\overbar{\theta_i})(\theta_i'+\overbar{\theta_i'})= 4\theta_i\theta_i' \) otherwise. 
We know that \( |\alpha_i|^2=(d-2)^2 \) if \( |\lambda_i|\leq 2\sqrt{d-1} \) and \( |\alpha_i|^2=d^2-\lambda_i^2 \) otherwise.
Therefore, 
\endgroup
\[
h_i(x)=
\begin{cases}
x^2-2\lambda_ix+\lambda_i^2-(d-2)^2 \text{ if } |\lambda_i|\leq 2\sqrt{d-1}\\
x^2-2\lambda_ix+4(d-1)-(d^2-\lambda_i^2) \text{ otherwise}
\end{cases}
\]
Thus, in both cases, \( h_i(x)=x^2-2\lambda_ix+\lambda_i^2-(d-2)^2=(x-\lambda_i)^2-(d-2)^2 \). 
This completes the proof. 
\end{proof}

\end{document}